\newtheorem{theorem}{Theorem}
\newtheorem{lemma}{Lemma}
\newtheorem{remark}{Remark}
\newtheorem{example}{Example}
\begin{document}
\title{Conjugate Words and Intersections of Geodesics in $\mathbb{H}^2$}
\author{Rita Gitik}
\address{ Department of Mathematics \\ University of Michigan \\ Ann Arbor, MI, 48109}
\email{ritagtk@umich.edu}
\date{\today}

\begin{abstract}
We investigate intersections of geodesic lines in $\mathbb{H}^2$ and in an associated tree $T$, proving the following result.
Let $M$ be a punctured hyperbolic torus and let $\gamma$ be a closed geodesic in $M$.
Any edge of any triangle formed by distinct geodesic lines in  the preimage of $\gamma$ in $\mathbb{H}^2$ is shorter then $\gamma$.
However, a similar result does not hold in the tree T.  
Let $W$ be a reduced and cyclically reduced word in $\pi_1(M)= \langle x, y \rangle$. We construct several examples of triangles in $T$ formed by distinct axes in $T$ stabilized by conjugates of $W$ such that an edge in those triangles is longer than $L(W)$. 
We also prove that if $W$  overlaps two of its conjugates in such a way that the overlaps cover all of $W$ and the overlaps do not intersect, then there exists a decomposition $W=BC^kI, k > 0$, with $B$ a terminal subword of $C$ and $I$  an initial subword of $C$.
\end{abstract}

\subjclass[2010]{Primary: 53C22; Secondary: 30F45, 20E05, 20E45}

\maketitle

\section{Introduction}

The study of curves on surfaces is a classical subject going back to the origins of topology, \cite{Bi}. Of particular interest are closed geodesics which can be investigated by looking at their lifts in covering spaces of the surface, \cite{F-H-S}, \cite{Ga}, \cite{Gr}, \cite{H-S2}, \cite{H-S3}, \cite{NC}. In this paper we consider hyperbolic surfaces and study the intersections of geodesic lines in $\mathbb{H}^2$, \cite{H-S1}. In general, the patterns of such intersections are very complicated, so we restrict ourselves to three geodesic lines in $\mathbb{H}^2$ which are lifts of the same closed geodesic in a punctured hyperbolic surface. For the sake of clarity we choose the surface to be a punctured torus.

An important tool in studying geodesic lines in $\mathbb{H}^2$ is  the tree $T$ in $\mathbb{H}^2$, defined as follows, cf. \cite{H-S1}, pp.111-112.

Let $M$ be a hyperbolic punctured torus and let $x_0$ and $y_0$ be disjoint infinite geodesic arcs on $M$ such that $M$ cut along $x_0 \cup y_0$ is an open two-dimensional disk $D$. There exist closed geodesics  $x$ and $y$ in $M$  such that $x \cap x_0=$point, $x \cap y_0 =\varnothing, y \cap x_0=\varnothing$, and $y \cap y_0=$point, which generate the fundamental group of $M$. Note that the fundamental group of $M$  is a free group of rank two, 
$\pi_1(M)= \langle x, y \rangle$. The universal cover of $M$ is the hyperbolic plane $\mathbb{H}^2$, so $M= \pi_1(M) \diagdown \mathbb{H}^2$. Let $\widetilde{D}$ be a lift of the disc $D$ to $\mathbb{H}^2$. Note that $\mathbb{H}^2$ is tiled by the translates of the closure of $\widetilde{D}$ by $\pi_1(M)$. Let $T$ be the graph in $\mathbb{H}^2$ dual to this tiling, i.e. the vertices of $T$ are located one in each translate of $\widetilde{D}$, and each edge of $T$ connects two vertices of $T$ in adjacent copies of
$\widetilde{D}$, so each edge intersects  one lift of either $x_0$ or $y_0$ once. As $\mathbb{H}^2$ is simply connected, $T$ is a tree. Note that $T$ is the Cayley graph of  $\pi_1(M)= \langle x, y \rangle$. Define the distance $d_T(v,u)$ between two vertices of $T$ to be the number of edges in a shortest path in $T$ connecting $v$ and $u$.

Any element $f$ of $\pi_1(M)$ acts on $T$ leaving invariant a unique line, called the axis of $f$, which contains all vertices $v$ with minimum
$d_T(v, f(v))$. That minimum is called the translation length of $f$, and is equal to the length of the word $W$ in $\pi_1(M)= \langle x, y \rangle$ obtained from $f$ by reduction and cyclic reduction. Denote the length of the word $W$ in $\pi_1(M)= \langle x, y \rangle$ by $L(W)$.

We prove the following result in Section 2.

\begin{theorem}
Let $M$ be a punctured hyperbolic torus and let $\gamma$ be a closed geodesic in $M$.
Any edge of any triangle formed by distinct geodesic lines in  the preimage of $\gamma$ in $\mathbb{H}^2$ is shorter then $\gamma$.
\end{theorem}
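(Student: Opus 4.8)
The plan is to pass to the universal cover and reduce the statement to one about arc length along a single lift. Let $g \in \pi_1(M)$ be the element whose conjugacy class is represented by $\gamma$; since the hyperbolic length of a closed geodesic equals the translation length of its holonomy, every line in the preimage of $\gamma$ is the axis of a conjugate of $g^{\pm 1}$, and each such axis is translated along itself by that conjugate with translation distance exactly $\ell(\gamma)$. Let $L_1, L_2, L_3$ be the three distinct lines forming the triangle, with vertices $P_{12}=L_1\cap L_2$, $P_{13}=L_1\cap L_3$, $P_{23}=L_2\cap L_3$, and let $e=[P_{12},P_{13}]$ be the edge on $L_1$. Projecting $L_1$ to $M$ wraps it around $\gamma$ with period $\ell(\gamma)$, so $\pi|_e$ is injective precisely when the length of $e$ is less than $\ell(\gamma)$, and in that case $\pi(e)$ is a genuine sub-arc of $\gamma$. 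Thus it suffices to prove the metric statement $d(P_{12},P_{13})<\ell(\gamma)$, and I would argue by contradiction, assuming $d(P_{12},P_{13})\ge \ell(\gamma)$.

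The engine is the stabilizer of $L_1$. Let $g_1$ be the primitive hyperbolic element with axis $L_1$, chosen among $g_1^{\pm 1}$ so that its translation direction along $L_1$ points from $P_{12}$ toward $P_{13}$; then $g_1$ lies in $\pi_1(M)$, preserves the preimage of $\gamma$, and carries $P_{12}$ to a point $g_1 P_{12}$ on $L_1$ at distance $\ell(\gamma)$ from $P_{12}$, hence lying on the closed segment $e$ (in its interior if the inequality is strict). Because $g_1$ is an isometry fixing $L_1$ together with its orientation, it sends the crossing configuration at $P_{12}$ to a congruent one at $g_1 P_{12}$: the line $g_1 L_2$ crosses $L_1$ transversally at $g_1 P_{12}$, making the same angle and entering the same side of $L_1$ as $L_2$ did, i.e.\ entering the triangle. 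Here $g_1 L_2\neq L_1$ (it crosses $L_1$) and $g_1 L_2\neq L_2$ (a hyperbolic isometry preserves only its own axis), so $g_1 L_2$ is a genuinely new lift crossing the interior of the edge $e$.

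I would then track the endpoints on $\partial\mathbb{H}^2$: the endpoints of $L_2$ separate those of $L_1$, and $g_1$ pushes both endpoints of $L_2$ toward its attracting fixed point, so the endpoints of $g_1 L_2$ are \emph{nested} inside those of $L_2$, whence $g_1 L_2$ is disjoint from $L_2$. A line entering the triangle through $e$ must leave through another side; being disjoint from $L_2$, it cannot exit through the edge on $L_2$, so $g_1 L_2$ must cross $L_3$ at an interior point $R$ of the edge $[P_{13},P_{23}]$. This produces a strictly smaller triangle $\Delta'$ bounded by $L_1,L_3,g_1 L_2$ whose edge on $L_1$ is $[g_1 P_{12},P_{13}]$ of length $d(P_{12},P_{13})-\ell(\gamma)$. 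Iterating with $g_1^{k}L_2$ yields a descent in which each step subtracts $\ell(\gamma)$ and introduces a new distinct lift meeting the compact region $\Delta$.

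The main obstacle is closing this descent into an actual contradiction, and I expect it to concentrate in two places. First, the borderline range $\ell(\gamma)\le d(P_{12},P_{13})<2\ell(\gamma)$ is not eliminated by the descent alone, since one subtraction already drops the edge below $\ell(\gamma)$; ruling it out should require combining the cyclic-order bookkeeping above (to show the resulting three-line configuration simply cannot close up into a triangle) with the fact that geodesics in minimal position bound no bigons, following the framework of Hass--Scott cited in the introduction. Second, the equality case $d(P_{12},P_{13})=\ell(\gamma)$ must be handled to obtain the \emph{strict} inequality: here $g_1 P_{12}=P_{13}$, forcing $g_1 L_2$ to pass through the vertex $P_{13}$, and I would show this forces either the coincidence $g_1 L_2 = L_3$ or a third lift through a vertex, either of which degenerates the triangle. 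Making these last exclusions rigorous, rather than the construction of the extra intersection, is where I anticipate the real work to lie.
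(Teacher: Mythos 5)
Your construction is sound as far as it goes: if the edge $[P_{12},P_{13}]$ of $L_1$ had length at least $\ell(\gamma)$, then $g_1L_2$ would indeed be a new lift of $\gamma$ crossing that edge, disjoint from $L_2$ (the endpoint-nesting argument is correct), hence forced to exit through the side on $L_3$, yielding a smaller triangle whose edge on $L_1$ has length $d(P_{12},P_{13})-\ell(\gamma)$. But at that point the proposal stops being a proof, and you say so yourself: producing a smaller triangle is not a contradiction, because triangles with short edges genuinely exist --- the theorem asserts precisely that all triangles are of that kind. The descent subtracts $\ell(\gamma)$ at each step and terminates after finitely many steps in a perfectly consistent configuration, so no absurdity ever results; in particular the range $\ell(\gamma)\le d(P_{12},P_{13})<2\ell(\gamma)$, which is the whole problem, is untouched. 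The two devices you propose for closing this gap do not work as stated: the hope that the new three-line configuration ``cannot close up into a triangle'' is contradicted by your own construction (the lines $L_1$, $g_1L_2$, $L_3$ do bound a triangle), and the no-bigon property is automatic in $\mathbb{H}^2$ (two distinct geodesic lines meet at most once), so it carries no extra information. The equality case is likewise only sketched. In short, the entire logical content of the theorem --- the mechanism converting ``edge $\ge \ell(\gamma)$'' into a contradiction --- is missing.

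It is worth seeing where the paper finds that mechanism, because your outline has no analogue of it. The paper projects the picture to the dual tree $T$ (the Cayley graph of $\pi_1(M)=\langle x,y\rangle$) by counting crossings with the lifts of the arcs $x_0$ and $y_0$: the segment of $l$ from $P$ to $f^2(P)$ crosses exactly $2L(W)$ such lifts, where $W$ is the reduced cyclically reduced word conjugate to $f$; on the other hand, every one of those crossings is shared with $n$, with $m$, or with $f^2(n)$, and Lemma 1 of the paper (proved by an abelianization argument) bounds the overlap in $T$ of two \emph{distinct} axes of conjugate elements by $L(W)-2$ edges. Adding the three bounds gives strictly fewer than $2L(W)$ crossings, the desired contradiction. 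That fellow-traveling bound --- distinct lifts of the same closed geodesic cannot run parallel for too long, measured combinatorially --- is the quantitative engine your angle-and-endpoint bookkeeping lacks, and it is exactly what rules out the borderline cases your descent cannot reach. To complete your approach you would need to prove some statement of this kind (or import Lemma 1 and the projection to $T$), at which point you would essentially be reproducing the paper's proof.
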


However, a similar result does not hold in the tree T. In Section 3 we construct several examples of triangles in $T$ formed by distinct axes in $T$ stabilized by conjugates of $W$, such that an edge in those triangles is longer than $L(W)$. 

In Section 4 we determine the general form of a reduced and cyclically reduced word $W$ in $\pi_1(M)= \langle x, y \rangle$ which  overlaps two of its conjugates in such a way that the overlaps cover all of $W$, proving the following result.

\begin{theorem}
Let $W$ be a reduced and cyclically reduced word in $\pi_1(M)= \langle x, y \rangle$ which overlaps two of its conjugates in such a way that the overlaps cover all of $W$ and the overlaps do not intersect. Then there exists a decomposition $W=BC^kI, k > 0$, with $B$ a terminal subword of $C$ and $I$  an initial subword of $C$.
\end{theorem}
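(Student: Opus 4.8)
The plan is to convert the two overlap hypotheses into periodicity statements about the bi-infinite word $\underline{W} = \cdots WWW\cdots$ carried by the axis of $W$ in $T$, and then to fuse these statements into a single period of $W$ from which the decomposition $W = BC^{k}I$ can be read off directly. First I would fix notation, writing $W = w_{1}\cdots w_{L}$ with $L = L(W)$, and recall that since $W$ is reduced and cyclically reduced its axis $\mathrm{Ax}(W)$ is labelled by $\underline{W}$, while the axis of any conjugate $gWg^{-1}$ is the translate $g\cdot\mathrm{Ax}(W)$, labelled by the \emph{same} periodic bi-infinite sequence because left translation in the Cayley graph $T$ preserves edge labels. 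An overlap of $\mathrm{Ax}(W)$ with a conjugate axis is therefore a common segment of two lines each labelled by $\underline{W}$; reading this segment in the two ways shows that $\underline{W}$ agrees with a nontrivial shift of itself along a window, that is, it yields a period of $\underline{W}$ valid on that window. Because the two overlaps cover all of $W$ and do not intersect, the corresponding windows cover one fundamental domain $[1,L]$ and meet at a single cut point between $w_{m}$ and $w_{m+1}$, so that $W$ splits as $W = W'W''$ with $W'$ the prefix carried by the first overlap and $W''$ the suffix carried by the second.

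Next I would extract from the first overlap a period $p$ valid on a window containing $W'$ and from the second a period $q$ valid on a window containing $W''$. The heart of the argument is to promote these two local periods to one global period $d$ of $W$ with $0 < d < L$ (the shifts may be reduced modulo $L$ using the ambient period $L$ of $\underline{W}$, and are nonzero because the conjugate axes are distinct from $\mathrm{Ax}(W)$). Here I expect to invoke the theorem of Fine and Wilf: on the region where the two windows overlap --- which is nonempty and, by the covering hypothesis together with cyclic reducedness, long enough relative to $p + q - \gcd(p,q)$ --- the two periods force the common period $d = \gcd(p,q)$, and this period then propagates across the cut point to all of $[1,L]$. Setting $C = w_{1}\cdots w_{d}$, the word $W$ becomes a factor of $C^{\infty}$.

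Finally, writing a factor of $C^{\infty}$ of length $L$ in the form $W = BC^{k}I$ with $B$ a terminal subword of $C$, $I$ an initial subword of $C$, and $k = \lfloor (L - |B| - |I|)/d \rfloor$ is routine once the period $d = |C|$ is known; the real content is to guarantee $k > 0$, i.e.\ that at least one full copy of $C$ fits inside $W$, which follows because the two overlaps are proper and disjoint yet cover all of $W$, forcing $L > d$. The main obstacle I anticipate is the index bookkeeping in the middle step: the shifts realising the two overlaps may point in either direction and their shifted copies need not remain inside a single fundamental domain, so one must track the windows carefully, confirm that their intersection is genuinely long enough to apply Fine and Wilf, and verify that it is precisely the non-intersection hypothesis that makes the two local periods compatible and hence yields one global period. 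Organising this case analysis cleanly, and confirming at each stage that no hidden cancellation occurs thanks to reducedness, is where the real work will lie.
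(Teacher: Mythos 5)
Your opening reduction (overlaps of axes $\Rightarrow$ periodicity statements about $\widetilde{W}$, with shifts nonzero mod $L$ because the axes are distinct, and the final step of reading $W=BC^kI$ off a period of $W$) is sound and matches the paper's framework. The gap is the central Fine--Wilf step, and it is fatal. By hypothesis the two overlaps do \emph{not} intersect, so the two windows on which your periods $p$ and $q$ hold are disjoint, sharing at most the cut point; there is no region where both periods are known to hold. Extending the windows by the ambient period $L$ does not repair this: writing $u=L(U)$, the $p$-window is $[1,u+p]$ and the $q$-window is $[u+1,L+q]$, so their intersection has length $\min\bigl(p,\,L(V)+q\bigr)\leq p$, which is strictly less than the Fine--Wilf bound $p+q-\gcd(p,q)$ unless $q$ divides $p$ (and symmetrically for the other translate). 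Worse, the conclusion you are driving at --- that $\gcd(p,q)$ is a global period of $W$ --- is false in this setting, so no amount of window bookkeeping can rescue the argument. The paper's own Example 3 is a counterexample: $W=yxy^2xy^2x$, $U=yxy^2xy$ occurs again in $\widetilde{W}$ at shift $p=3$, while the non-equivalent copies of $V=yxy$ occur at shifts $q\equiv 2$ or $5 \pmod 8$; thus $\gcd(p,q)=1$, yet $W$ is not a power of a single letter. The correct conclusion there is the period $3$ decomposition $W=(yxy)^2\,yx$, i.e.\ the period comes from \emph{one} overlap only.

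This points to the ingredient your proposal is missing. The paper works only with the longer overlap (WLOG $L(U)>\tfrac12 L(W)$), obtains $U=BC^k$ and $U\cup U_2=BC^{k+1}$ from Lemma 2, and then the real work is to show that this $C$-periodic region already contains all of $W$, which gives $W=BC^kI$ at once. Ruling out the alternative $U\cup U_2\subsetneq W$ is done by a case analysis (Cases 1--4) whose contradictions all take the form ``then the axes would intersect in a segment longer than $U$ (or $V$)'' --- that is, it crucially uses that $U$ and $V$ are the \emph{exact, maximal} intersections of the axes, a fact your proposal never invokes. Where a periodicity-combination lemma is needed (the ``non-standard occurrence'' cases), the paper uses the commuting-words theorem (two commuting words are powers of a common word, Lyndon--Schupp) rather than Fine--Wilf, precisely because the Fine--Wilf length hypothesis is unavailable here; a separate argument also handles the boundary case $L(U)=L(V)$. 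To fix your proof you would need to replace the gcd-fusion step by an argument of this maximality type showing that the single period $p$ propagates across the cut point.
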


\section{Triangles in $\mathbb{H}^2$}

We use the notation from the previous section.

\begin{lemma}
 Let $f$ be an element in $\pi_1(M)= \langle x, y \rangle$ and let $W$ be its reduced and cyclically reduced conjugate.
Consider two axes in the tree $T$ stabilized by $f$ and its conjugate
 $f'\in \pi_1(M)$. If such axes intersect in an interval labeled with a word $W_0$ such that $L(W_0)=L(W)-1$ then they coincide.
\end{lemma}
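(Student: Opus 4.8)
The plan is to translate the geometric statement into a combinatorial one about the periodic edge-labelings of the two axes, and then to show that an overlap of length $L(W)-1$ already forces these labelings to propagate identically in both directions. Write $n=L(W)$. Since $f'$ is conjugate to $f$, and $f$ is conjugate to the cyclically reduced word $W$, both axes are bi-infinite geodesics in the Cayley graph $T$ whose edge-labels, read in the direction of translation, are periodic with period $n$, each period being a cyclic permutation (rotation) of $W$. Thus each axis is determined, as an edge-path, by any one of its vertices together with $n$ consecutive edge-labels. First I would record the easy direction: if the two periodic labelings agree on $n$ consecutive edges of the shared interval, then, since in a Cayley graph each label determines a unique edge out of a given vertex, the two axes agree on the next edge as well, and propagating this in both directions shows they coincide. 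So it suffices to upgrade the given overlap of $n-1$ edges to an overlap of $n$ edges.

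Assume for contradiction that the axes do not coincide. As $T$ is a tree, their intersection is a finite interval $J$ of length $\ge n-1$, and the two axes branch apart at (at least) one endpoint $q$ of $J$. Reading the last $n-1$ edges of $J$ into $q$ gives a word $U$ of length $n-1$ that is a common length-$(n-1)$ window of the two periodic labelings; in the co-oriented case $U$ is a length-$(n-1)$ subword of the cyclic word $W$ for both axes. The combinatorial core is then the following uniqueness statement: within the cyclic word $W$, a window of length $n-1$ has a uniquely determined continuation; equivalently, if $Ua$ and $Ub$ are both rotations of $W$, then $a=b$. I would prove this by the standard period argument. Writing the coincidence of the two windows as $w_{i+k}=w_{j+k}$ for $k=1,\dots,n-1$, with $d=j-i\not\equiv 0 \pmod n$, these $n-1$ relations are exactly all but one of the relations $w_m=w_{m+d}$, $m\in\mathbb{Z}/n$; chaining the available ones around the single $d$-cycle of $\mathbb{Z}/n$ containing $i$ recovers the missing relation $w_i=w_{i+d}$, i.e.\ $a=b$. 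Granting this, the edge continuing $f$'s axis past $q$ and the edge continuing $f'$'s axis past $q$ carry the same label, hence are the same edge of $T$, contradicting the branching at $q$.

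The step I expect to be the main obstacle is the orientation bookkeeping, that is, whether $f$ and $f'$ translate in the same direction along $J$. In the co-oriented case the argument above applies verbatim. In the anti-oriented case, reading $J$ in a fixed direction exhibits $U$ simultaneously as a length-$(n-1)$ window of the cyclic word $W$ and of the cyclic word $W^{-1}$; here I would either rule this configuration out for a pair of conjugate elements or rerun the uniqueness lemma with $W^{-1}$ in place of $W$ after reconciling the two readings, leaning on the freeness of the $\pi_1(M)$-action on $T$ (no nontrivial element fixes a vertex) together with the rotation-of-$W$ description of both labelings. The remaining bookkeeping, including the degenerate case $n=1$ where $U$ is the empty word, is routine once the uniqueness lemma and the co-orientation are in hand.
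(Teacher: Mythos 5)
Your argument for the co-oriented case is correct, and it takes a genuinely different route from the paper's. The paper pins down the unknown letter by abelianization: after arranging $W=W_0x$ and choosing a reduced, cyclically reduced conjugate $W'$ of $f'$ containing $W_0$, it notes that $W'$ is $W_0$ with one letter adjoined at an end, and since conjugate elements have the same image in $H_1(M)\cong\mathbb{Z}^2$ that letter must be $x$; hence $W'=xW_0$ or $W'=W_0x$, both axes carry $\widetilde{W}$ in matching phase, and they coincide (the propagation step being left implicit). You instead prove a combinatorial uniqueness-of-continuation lemma --- a length-$(n-1)$ window of the cyclic word $W$ determines the next letter --- by chaining the relations $w_m=w_{m+d}$ around the cycle of $m\mapsto m+d$ containing the single missing index. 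That chaining argument is valid (and works whether or not $\gcd(d,n)=1$, since every relation in the relevant cycle except the missing one is available), and your version is more self-contained: it makes the propagation explicit and uses nothing beyond the Cayley-graph structure, whereas the paper's abelianization trick is shorter but leans on the ambient free group.

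However, as written your proof has a genuine gap: the anti-oriented case, which you yourself call ``the main obstacle,'' is flagged but not closed, and of your two suggested fixes only one can succeed. ``Rerunning the uniqueness lemma with $W^{-1}$ in place of $W$'' does not work: in that case the shared interval exhibits $U$ as a window of the cyclic word $W$ and simultaneously as a window of the cyclic word $W^{-1}$, i.e.\ two \emph{different} cyclic words, so no common-continuation statement is available --- what must be shown is that this configuration is impossible. It is, but that needs its own argument. For instance: writing $W=w_0\cdots w_{n-1}$ cyclically, occurrences of $U$ at position $i$ and of $U^{-1}$ at position $j$ yield $w_m=w_{s-m}^{-1}$ for every $m\neq j-1$ in $\mathbb{Z}/n$, where $s\equiv i+j-2$; any solution $m\neq j-1$ of $2m\equiv s$ gives $w_m=w_m^{-1}$, which is absurd, and a short parity analysis of the remaining cases forces either $i\equiv j$ (excluded, since the occurrences are distinct) or two adjacent mutually inverse letters in $W$, contradicting cyclic reducedness. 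I note that the paper has the same blind spot --- it simply takes $W'$ to contain $W_0$ rather than $W_0^{-1}$ --- so you were right to isolate the issue; but flagging the case is not resolving it, and your proof is incomplete until an argument like the above is supplied.
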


\begin{proof}
WLOG $W_0$ is an initial subword of $W$, hence WLOG there exists a decomposition $W=W_0x$, where $x$ is a generator of $\pi_1(M)= \langle x, y \rangle$. Let $W'$ be a reduced and cyclically reduced conjugate of $f'$ containing $W_0$. Then the abelianization of $W$  implies that either $W'=xW_0$ or $W'=W_0x=W$. In both cases the axes coincide.
\end{proof}

\textbf{Proof of Theorem 1}

Assume to the contrary that there exists a triangle in $\mathbb{H}^2$ formed by geodesic lines $l,m$, and $n$, which are distinct lifts of the geodesic $\gamma$, such that the length of the side lying in
$l$ is longer than $\gamma$. Note that $l$ is stabilized by some element $f$ in $\pi_1(F)$ which acts as a hyperbolic isometry of $\mathbb{H}^2$.
Let $P$ be the intersection of $l$ and $n$, and let $X$ be the intersection of $l$ and $m$. The length of $\gamma$ is equal to the length of the segment 
$Pf(P)$ which is equal to the length of the segment $f(P) f^2(P)$. 

Consider two cases.

\textbf{Case 1}. \emph{The side $PX$ of the triangle formed by lines $l,m$, and $n$ is shorter than the segment $P f^2(P)$.} 

See Figure 1.

\includegraphics[scale=0.5]{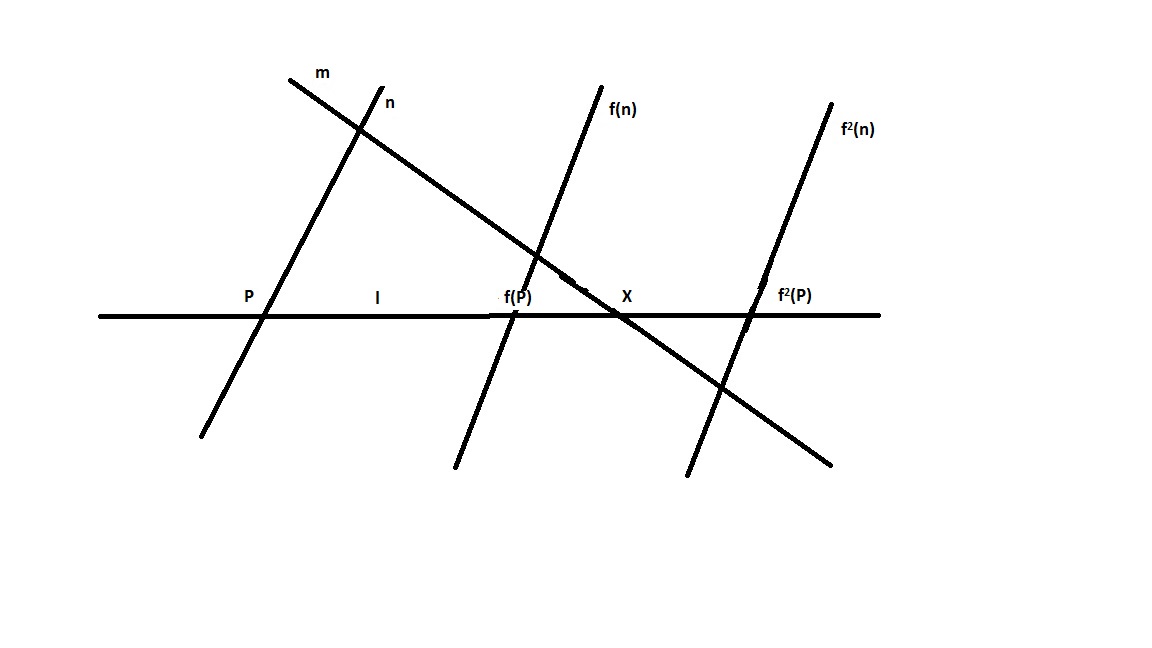}
\captionof{figure}{}

\bigskip

By assumption, the side $PX$  is longer than $\gamma$, so the segment $X f^2(P)$ is shorter than the segment $PX$.
Consider the geodesics $f(n)$ and $f^2(n)$.
As $f$ is an isometry, the geodesics $n, f(n)$, and $f^2(n)$ make the same angle with $l$. Then as $X f^2(P)$ is shorter than $PX$, the angle between $n$ and $l$ is equal to the angle between $f^2(n)$ and $l$, and the opposite angles between $m$ and $l$ are equal, it follows that $m$ and
 $f^2(n)$ intersect, as shown in Figure 1.

Let $T$ be the tree in $\mathbb{H}^2$ defined above and let $W$ be a reduced and cyclically reduced word conjugate to $f$ in $\pi_1(F)$. 
The geodesic lines $l, m$, and $n$  are transversal to the lifts of the geodesics $x_0$ and $y_0$ in $\mathbb{H}^2$.
Consider the intersections of the lifts of the geodesics $x_0$ and $y_0$ with lines $l,m$, and $n$.

Choose a projection $s: \mathbb{H}^2 \rightarrow T$ which respects the action of $\pi_1(F)$ on 
$\mathbb{H}^2$. It can be arranged that the restriction of $s$ to each component of the lift of $\gamma$ in $\mathbb{H}^2$ is monotone, so $s$ maps each component of the lift of $\gamma$ onto a geodesic in $T$.

Let $b$ lifts of $x_0$ and $y_0$ intersect both $l$ and $n$ to the left of the point $P$ and let $a$ lifts of $x_0$ and $y_0$ intersect both $l$ and $n$ to the right of the point $P$. Then there are $a+b$ lifts of $x_0$ and $y_0$ crossing $l$ and $n$, hence the length of the intersection $s(l) \cap s(n)$ is
 $a+b$. Lemma 1 implies that $a+b < L(W)-1$. By a similar argument, the number $c$ of the lifts of $x_0$ and $y_0$ intersecting both $l$ and $m$ is also less than $L(W)-1$. As $f$ is an isometry, there are $b$ lifts of $x_0$ and $y_0$ crossing $l$ and $f^2(n)$ to the left of $f^2(P)$. Then the total number of the lifts of $x_0$ and $y_0$ crossing $l$ between the points $P$ and $f^2(P)$ is $a+b+c$, which is strictly less than $2L(W)$. However by construction, the number of the lifts of $x_0$ and $y_0$ crossing  $l$ between the points $P$ and $f^2(P)$ should be equal to $2L(W)$.
   
This contradiction  completes the proof of Theorem 1 in Case 1.

\textbf{Case 2}. \emph{The side $PX$ of the triangle formed by lines $l,m$, and $n$ is longer or equal than the segment $P f^2(P)$.}

 See Figure 2.

\includegraphics[scale=0.5]{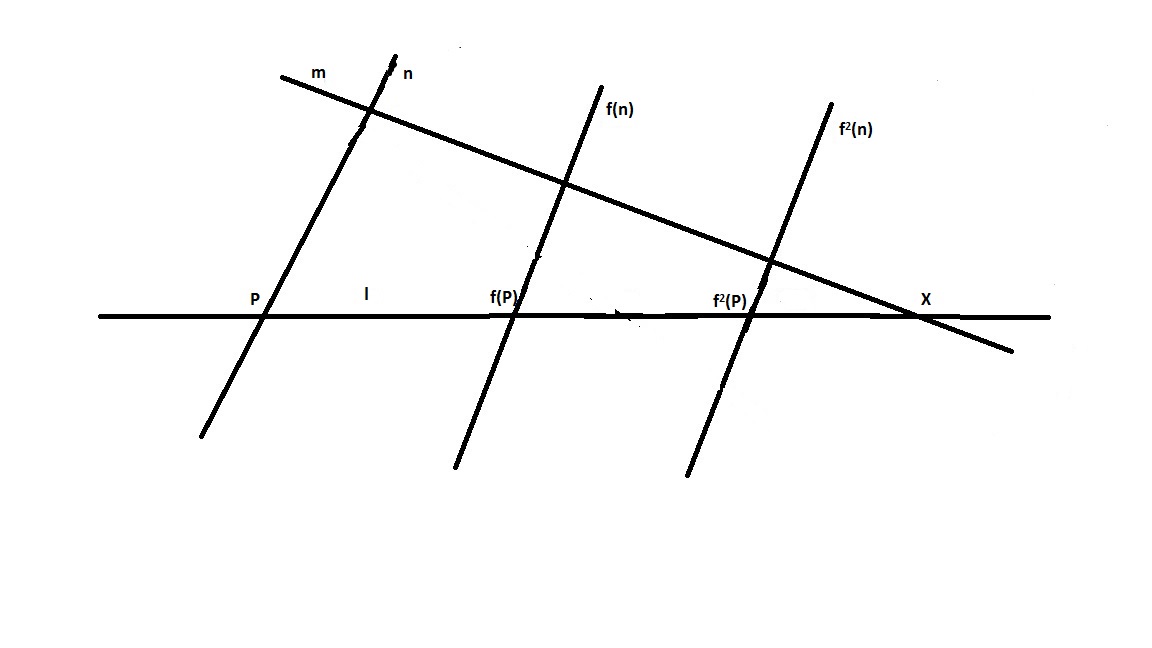}
\captionof{figure}{}

\bigskip

Let $a$ lifts of $x_0$ and $y_0$ intersect both $l$ and $n$ to the right of the point $P$. Then the length of the intersection $s(l) \cap s(n)$ is
 not shorter than $a$, hence Lemma 1 implies that $a < L(W)-1$. Let $c$ be the number of the lifts of $x_0$ and $y_0$ intersecting both $l$ and $m$ to the left of the point $f^2(P)$. Then the length of the intersection $s(l) \cap s(m)$ is not shorter than $c$, hence Lemma 1 implies that $c < L(W)-1$.
 Therefore the total number of the lifts of $x_0$ and $y_0$ crossing $l$ between the points $P$ and $f^2(P)$ is $a+c$, which is strictly less than 
 $2L(W)$. However by construction, the number of the lifts of $x_0$ and $y_0$ crossing  $l$ between the points $P$ and $f^2(P)$ should be equal to $2L(W)$.
   
This contradiction  completes the proof of Theorem 1 in Case 2. $\square$
\bigskip

The author would like to thank Max Neumann-Coto for sharing his ideas about Theorem 1.

\section{Triangles in the Tree $T$}

Consider again the tree $T$ defined above. As was mentioned already, $T$ can be considered to be the Cayley graph of the free group
$\pi_1(M)=<x,y>$. Let $W$ be a reduced and cyclically reduced word in $\{x, y, x^{-1}, y^{-1} \}$. Consider three distinct axes in $T$ stabilized by the word $W$ and two of its conjugates $f_1$ and $f_2$. Call the axes $\lambda, \lambda_1$, and $\lambda_2$. Let $\widetilde{W}$ denote the bi-infinite product of the word $W$. Note that all the axes $\lambda, \lambda_1$, and $\lambda_2$ are labeled by the bi-infinite word $\widetilde{W}$.

Choose a copy of the word $W$ in $\lambda$. We will work with that chosen copy. Assume that the axes intersect in such a way that $\lambda_1 \cap \lambda$ and $\lambda_2 \cap \lambda$ cover the word $W$ in $\lambda$. Note that the intervals $\lambda_1 \cap \lambda, \lambda_2 \cap \lambda$, and  $\lambda_1 \cap \lambda_2$ form a tripod in the tree $T$ which is a degenerate triangle. Denote the label of the interval $\lambda \cap \lambda_1$ by $U$ and the label of the interval $\lambda \cap \lambda_2$ by $V$. The four examples below show that, in contrast with Theorem 1, $W, \lambda_1$, and $\lambda_2$ can be chosen in such a way that $L(U \cup V) \geq L(W)$. 

\bigskip

Note that Lemma 1 implies that $L(U) \leq L(W)-2$ and $L(V)\leq L(W)-2$, so $L(U \cup V) \leq 2L(W)- 4$.

\bigskip

Let $\mu_i, i=1,2$ be a subinterval of $\lambda_i$ containing $\lambda_i \cap \lambda$ such that its label $W_i$ is a reduced and cyclically reduced conjugate of $f$. Then $W_1$ contains $U$ and  $W_2$   contains $V$.

\begin{example} See Figure 3.

Let $W=xyxyx, W_1=x^{-1}Wx=yxyx^2,$ and $W_2=xWx^{-1}=x^2yxy$. Then $U=W_1\cap W=xyx$ and $V=W_2 \cap W=xyx$, so that $U \cap V=x$.
Then $(W_1 \cup W_2) \cap W=U \cup V=W$.
\end{example}

\includegraphics[scale=0.6]{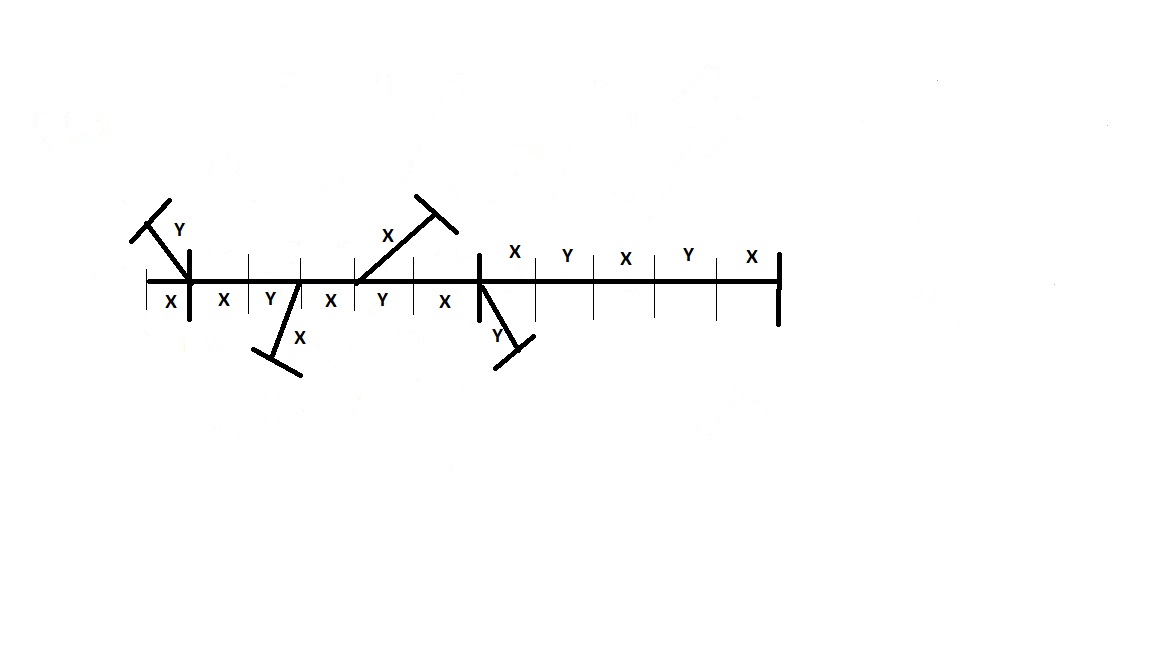}
\captionof{figure}{}

\bigskip

\begin{example} See Figure 4.

Let $W=xy^2xy^2x, W_1=y^{-1}x^{-1}Wxy=yxy^2x^2y,$ and $W_2=xWx^{-1}=x^2y^2xy^2$. Then $U=W_1\cap W=xy^2x$ and  $V=W_2 \cap W=xy^2x$, so that 
$U \cap V=x$. Then $(W_1 \cup W_2) \cap W=U \cup V=W$.
\end{example}

\includegraphics[scale=0.6]{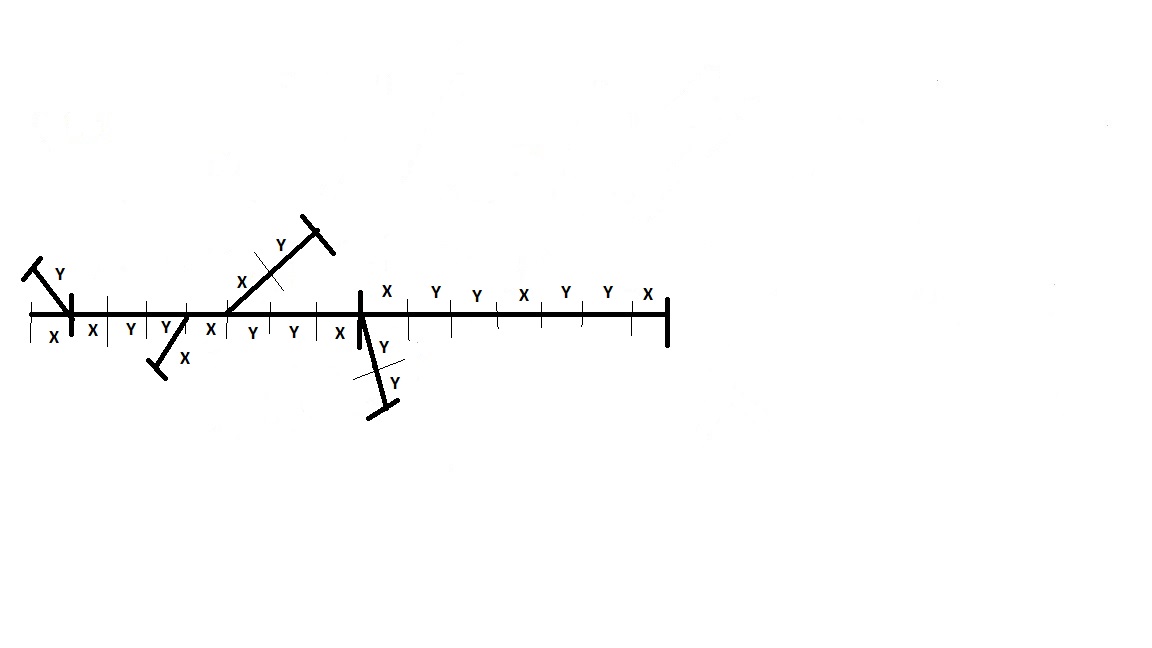}
\captionof{figure}{}

\begin{example} See Figure 5.

Let $W=yxy^2xy^2x, W_1=y^{-1}Wy=xy^2xy^2xy,$ and $W_2=y^2xWx^{-1}y^{-2}=y^2xyxy^2x$. Then $U=W_1\cap W=yxy^2xy$ and  $V=W_2 \cap \widetilde{W}=yxy$, so that $U \cap V=$point. Then $(W_1 \cup W_2) \cap \widetilde{W}=(U \cup V) \cap \widetilde{W}= Wy$.
\end{example}

\includegraphics[scale=0.6]{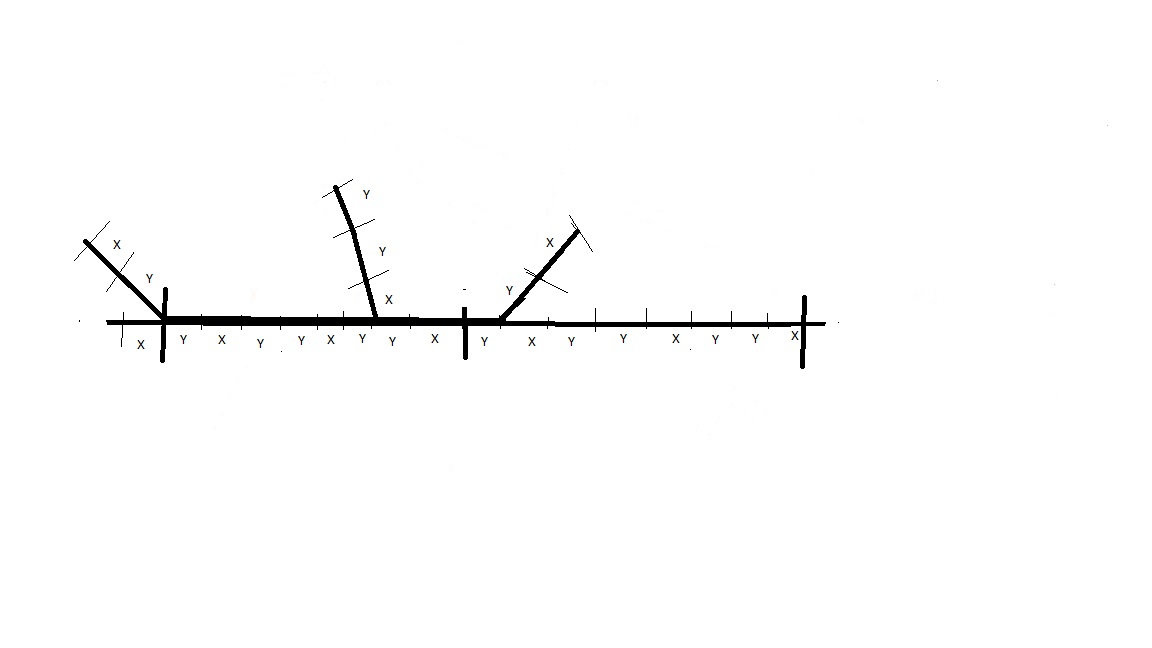}
\captionof{figure}{}

\begin{example} See Figure 6.

Let $W=yxyxyyxyxyyx, W_1=x^{-1}y^{-1}x^{-1}y^{-1}Wyxyx=yyxyxyyxyxyx$, and let $W_2=xWx^{-1}=xyxyxyyxyxyy$.
Then $U=W_1 \cap W=yxyxyyxyxy$ and $V=W_2 \cap \widetilde{W} =yxyxy$, so $U \cap V =$point.

Hence $(W_1 \cup W_2) \cap \widetilde{W} = (U \cup V) \cap \widetilde{W}= Wyxy$.

\end{example}

\includegraphics[scale=0.6]{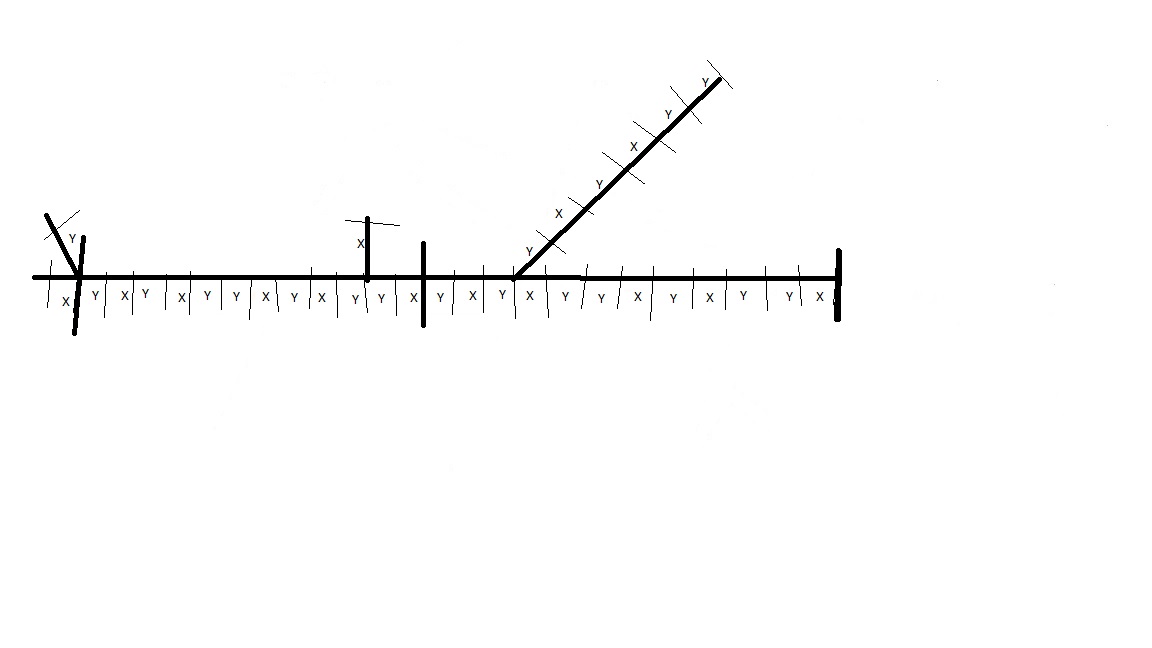}
\captionof{figure}{}

\section{Conjugate Words in a Free Group}

Let $W$ and $\widetilde{W}$ be as in the previous section. Note that $\widetilde{W}$ has a $Z$-shift. 

\begin{lemma}
Assume that there exists an initial subword $U$ of $W$  such that $\widetilde{W}$ contains a nonequivalent (i.e. not obtained by the $Z$-shift) copy of $U$. Call it $U_2$. If $U$ and $U_2$ overlap in such a way that the beginning of $U_2$ lies in $U$ and $L(U \cap U_2) >0$, then there exist decompositions  $U= BC^k$ and $U \cup U_2 = BC^{k+1}$ with $k >0$ such that $B$ is a terminal subword of $C$. If $U \cup U_2$  contains $W$, then there exists a decomposition $W=BC^kI, k > 0$, where $B$ is a terminal subword of $C$ and $I$ is an initial subword of $C$. If $U \cup U_2 =W$, then there exists a decomposition $W=BC^k, k>1$, where $B$ is a terminal subword of $C$.
\end{lemma}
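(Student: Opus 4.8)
The plan is to reduce the entire statement to a single periodicity phenomenon and then read off all three conclusions from the same decomposition. First I would fix a coordinate system on $\widetilde{W}$: index its letters by $\mathbb{Z}$, place the distinguished initial occurrence of $U$ on the positions $[0,n)$ with $n=L(U)$, so that the first copy of $W$ occupies $[0,w)$ with $w=L(W)$ and $n\le w$. Reading the hypotheses in these coordinates, the occurrence $U_2$ starts at some position $p$ whose beginning lies inside $U$, i.e. $0\le p<n$; the condition $L(U\cap U_2)>0$ forces $p<n$, and nonequivalence under the $Z$-shift (shifts by multiples of $w$), together with $0\le p<n\le w$, forces $p\ne 0$. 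Hence $0<p<n$.

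The key step is to extract a period. Both windows $[0,n)$ and $[p,p+n)$ carry the word $U$ and both agree letter-for-letter with $\widetilde{W}$, so on their overlap $[p,n)$ we get $U[i]=\widetilde{W}[i]=U[i-p]$. Thus $U$ admits the period $p$, meaning $U[j]=U[j+p]$ whenever $0\le j<n-p$. I would then invoke the standard normal form of a periodic word: writing $n=kp+r$ with $0\le r<p$ and $k\ge 1$ (here $k\ge 1$ because $p<n$), set $B$ to be the first $r$ letters of $U$ and $C$ to be the next $p$ letters. Periodicity gives $U=BC^k$, and comparing the length-$r$ suffix of $C$ with the first $r$ letters of $U$, again via the period, shows that $B$ is a terminal subword of $C$. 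This yields $k>0$ together with the decomposition $U=BC^k$.

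Next I would propagate the period to the union. The word $U\cup U_2$ occupies $[0,n+p)$, and the period relation $\widetilde{W}[i]=\widetilde{W}[i+p]$ holds for all $0\le i<n$: on $[0,n-p)$ it is the period of $U$, and on $[n-p,n)$ it follows because $[p,p+n)$ is a copy of $U$. Hence $U\cup U_2$ is again $p$-periodic of length $(k+1)p+r$ with the same block $C$ and the same initial piece $B$, giving $U\cup U_2=BC^{k+1}$. From here the remaining conclusions are immediate. If $U\cup U_2$ contains $W$, then $W$ is the length-$w$ prefix of $BC^{k+1}$; writing $w-r=mp+s$ with $0\le s<p$ produces $W=BC^mI$ with $I$ the length-$s$ prefix of $C$, and $m\ge 1$ follows from $w\ge n\ge p+r$, which is exactly the asserted $W=BC^kI$ with $k>0$. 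If instead $U\cup U_2=W$, then $W=BC^{k+1}$ with $k+1>1$.

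The main obstacle I anticipate is not the periodicity itself but the boundary bookkeeping: verifying that the period genuinely extends across the seam at position $n$, so that $U\cup U_2$ (and not merely $U$ and $U_2$ separately) is periodic; checking that $B$ emerges as a terminal subword of $C$ rather than merely an initial one; and tracking the exponents so that $k>0$ in the first decomposition and $k+1>1$ in the equality case. The degenerate situation $r=0$ (empty $B$) and the deduction of $p>0$ from the nonequivalence hypothesis should also be stated explicitly, since these are precisely the points at which the clean statement could otherwise fail.
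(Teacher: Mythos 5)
Your proof is correct and takes essentially the same approach as the paper: the paper's terse step ``as $U=U_2$, it follows that $U=BC^k$'' (with $C$ defined as the tail of $U_2$ beyond the overlap $P$) is exactly the periodicity-plus-division-with-remainder argument you carry out explicitly, and your block $C$ coincides, via the period, with the paper's $C$. The only difference is that you make the index bookkeeping (deducing $p\neq 0$, extending the period across the seam, and tracking exponents) explicit where the paper leaves it implicit.
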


\includegraphics[scale=0.6]{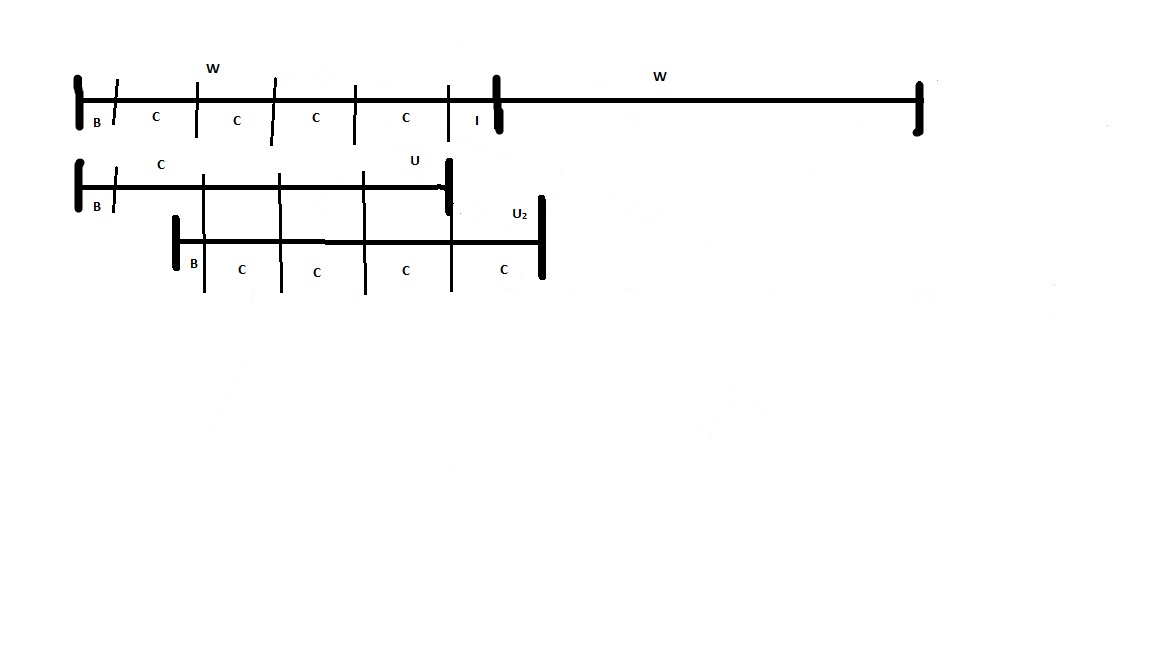}
\captionof{figure}{}

\begin{proof}
Let $P$ be the overlap of $U$ and $U_2$. Then there exists a decomposition $U_2=PC$, and 
$L(U_2)= k \cdot L(C) + n$ with $k > 0$. As $U=U_2$, it follows that $U=BC^k$, where $B$ is a terminal subword of $C$, and $L(B)=n$, see Figure 7. Then $U \cup U_2=BC^{k+1}$. If $U \cup U_2$ contains $W$, then $W$ is a proper initial subword of $BC^{k+1}$ containing $U$. Hence $W=BC^kI, k > 0$, where $B$ is a terminal subword of $C$ and $I$ is an initial subword of $C$. If $U \cup U_2 =W$, then $I$ is trivial and there exists a decomposition $W=BC^k, k>1$, where $B$ is a terminal subword of $C$.
\end{proof}

\begin{remark}
Note that $U \cup U_2$ might be a proper subword of $W$. In that case we do not have much information about $W$.
\end{remark}

\textbf{Proof of Theorem 2}

Let $W_1, W_2, U$ and $V$ be as in the previous section.
Assume that $U \cap V=$point and $U \cup V=W$, hence $U$ is a proper initial subword of $W$ and $V$ is a proper terminal subword of $W$.
Assume WLOG that $L(U)>L(V)$, (the case $L(U)=L(V)$ is considered separately at the end of the section). Then $L(U) > \frac{1}{2} L(W)$.

As the axes are generated by conjugate elements, there exist non-equivalent (i.e. not obtained by the $Z$-shift on $\widetilde{W}$) copies of the words 
$U$ and $V$ in $\widetilde{W}$. As  $L(U) > \frac{1}{2} L(W)$ there exists a non-equivalent copy of $U$ in $\widetilde{W}$ whose beginning is contained in $U$. Call that copy $U_2$.  Also there exists a non-equivalent copy of $V$ in $\widetilde{W}$ whose beginning is contained in $W$. Call that copy $V_2$.

As $U$ and $U_2$ satisfy the conditions of Lemma 2, there exist decompositions  $U= BC^k$ and 
$U \cup U_2 = BC^{k+1}$ with $k >0$ such that $B$ is a terminal subword of $C$.

If $W=U \cup V \subseteq U \cup U_2$, then Theorem 2 follows from Lemma 2.
Hence we need to rule out the case  $U \cup U_2 \subset U \cup V$. 

\bigskip

Assume that $U \cup U_2 \subset U \cup V$. It follows that $V$ and, hence $V_2$, begin with $C$. 

Note that either $V_2 \cap V \neq \varnothing$ or $V_2 \subset U$. 

\bigskip 

Consider $4$ cases.

\textbf{Case 1.}

$V_2 \subset U$ and the beginning $C$ of $V_2$ is "standard" in $U$, i.e. it is one of the $k$ copies of $C$ defined by the decomposition $U = BC^k$.  See Figure 8.

\includegraphics[scale=0.6]{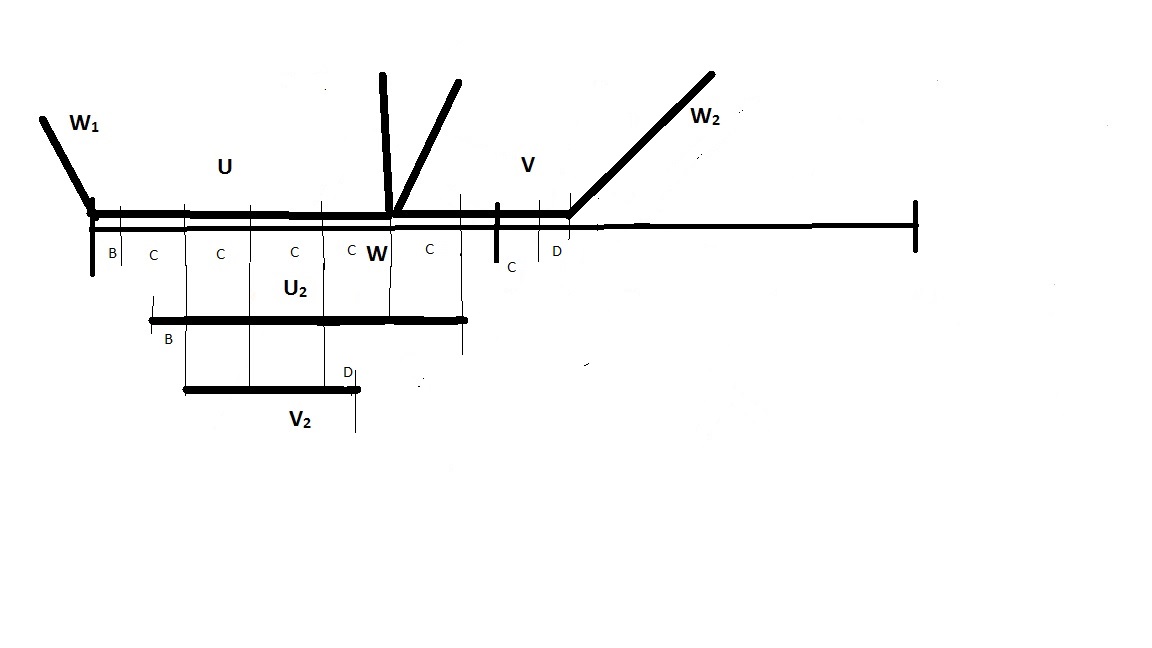}
\captionof{figure}{}

\bigskip

It follows that $V_2=V=C^lD$, where $l>0$ and $D$ is an initial subword of $C$. So the word $U_2$ in $\widetilde{W}$ is followed by the word $C^{l-1}D$. Note that the word $U_2$ in $\widetilde{W}$ corresponds to the word $U$ in $W_1$, so in the word $W_1$  the word $U$ is followed by a copy of the word $C^{l-1}D$, call it $V'$. However, the word $U$ in $W$ is followed by a copy of the word $C$. If
the word $V'$ is non-trivial, it should have non-trivial intersection with that copy of the word $C$ in $W$, so the intersection of $W_1$ with $W$ should be longer than $U$. This contradiction 
implies that $l=1$ and $D$ is trivial, hence $V_2=V=C$. Note that the word $V$ in $\widetilde{W}$ corresponds to the word $V_2$ in $W_2$. As the word $V_2$ in $W$ is preceded by the word $B$, the word $V$ in $W_2$ is preceded by a copy of the word $B$, hence the intersection of $W_2$ and 
$\widetilde{W}$ should be longer than $V$. This contradictions shows that Case 1 cannot happen.

\bigskip
 
\textbf{Case 2.}  
 
$V_2 \subset U$ and the beginning $C$ of $V_2$ is "non-standard" in $U$. See Figure 9.

\includegraphics[scale=0.6]{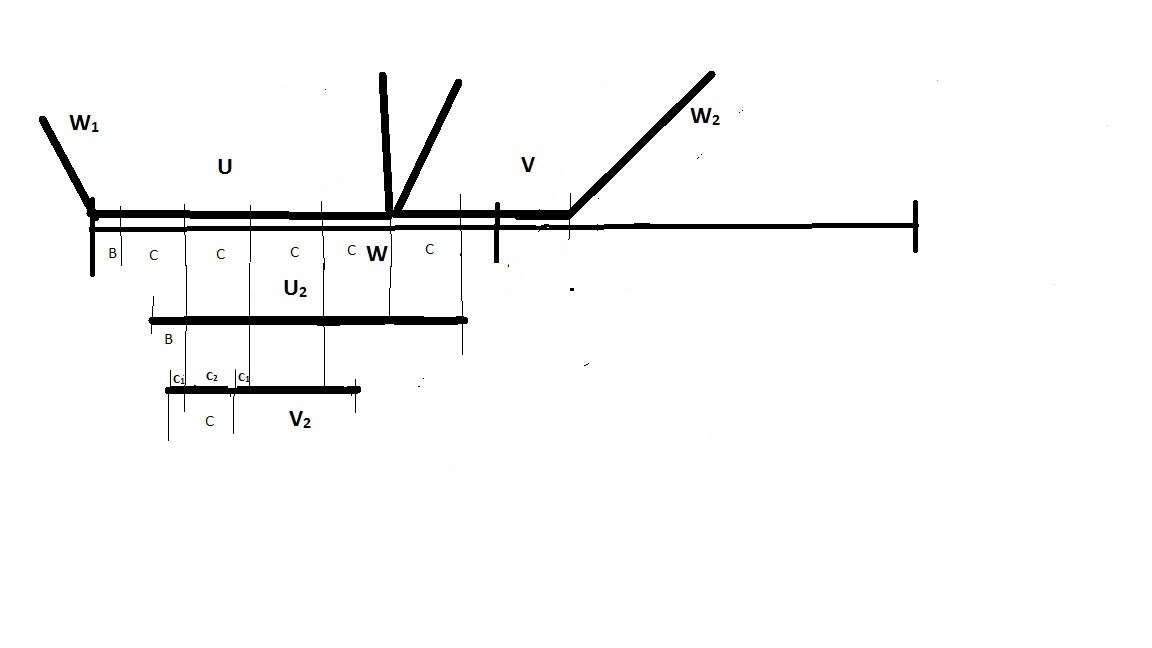}
\captionof{figure}{}

\bigskip

Note that there exist decompositions $C=C_1C_2=C_2C_1$, see Figure 9 and Figure 10.

\includegraphics[scale=0.6]{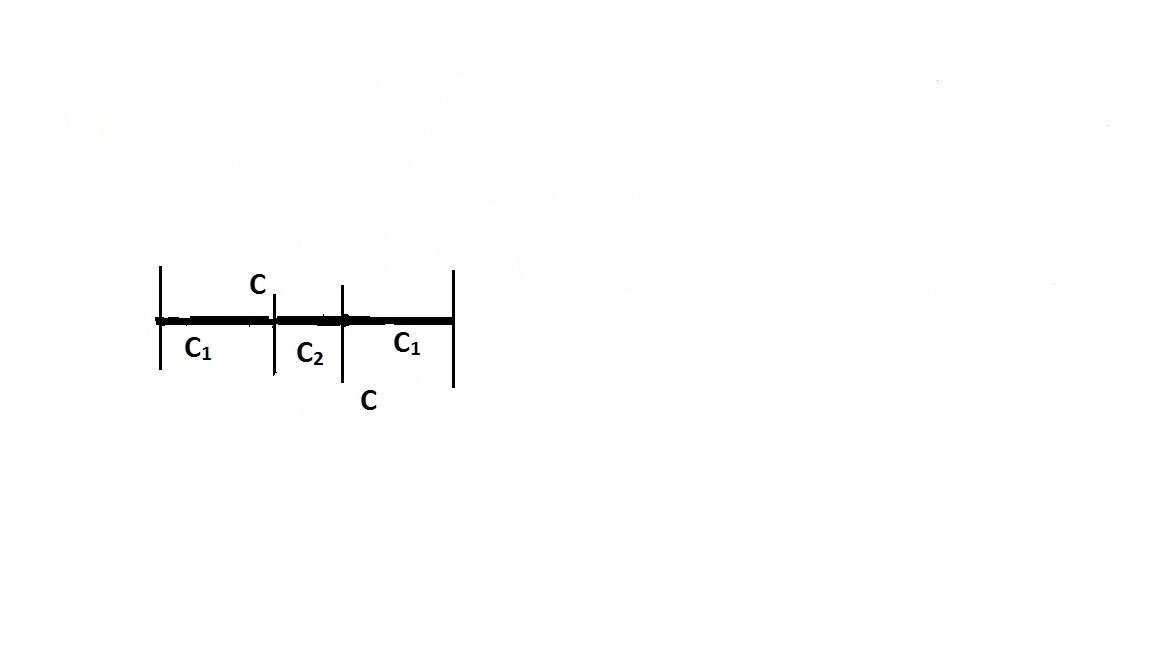}
\captionof{figure}{}

\bigskip

As $C_1$ and $C_2$ commute, they are powers of some $C_0$, \cite{L-S} p.10. It follows that $C=C_0^m, m>1$ and $U=BC^k=BC_0^{km}$. As $B$ is a terminal subword of $C$, it follows that $B=B_0C_0^l, l \ge 0$, where $B_0$ is a terminal subword of $C_0$ which might be empty. So $U=BC^k=B_0C_0^{mk+l}$ and $V_2=V=C_0^nD_0$, where $l>0, n >0$ and $D_0$ is an initial subword of $C_0$. 
Note that $C_0$ is standard in both $U$ and $V$. Hence we can apply the same argument as in Case 1, demonstrating that Case 2 cannot happen.

\bigskip

\textbf{Case 3.}

$V \cap V_2 \neq \varnothing$ and the initial $C$ of $V_2$ is in $U$. 

If the beginning $C$ of $V_2$ is "standard" in $U$, we can use the same argument as in Case 1, to obtain a contradiction.

If the beginning $C$ of $V_2$ is "non-standard" in $U$ , we can use the same argument as in Case 2, to obtain a contradiction. 

Therefore Case 3 is impossible.

\bigskip

\textbf{Case 4.}

$V \cap V_2 \neq \varnothing$ and the initial $C$ of $V_2$ intersects $V$.

We can use the same argument as in Case 2 to obtain a contradiction, so Case 4 is also impossible.

\bigskip

Therefore $U \cup V \subset U \cup U_2$, proving Theorem 2 when $L(U)>L(V)$.

\bigskip

Now consider the case when $L(U)=L(V)=\frac{1}{2}L(W)$.

If $U_2 =V$, then $W=U^2$ and the axes $\lambda, \lambda_1$,and $\lambda_2$ (defined in the previous section) coincide, contradicting their choice to be distinct. 

If $U_2 \neq V$ then $U$ and $U_2$ have a non-trivial intersection. If the beginning of $U_2$ is contained in $U$ then Lemma 2 implies that there exists a decomposition $U \cup U_2 =BC^{k+1}$, where $B,C$, and $k$ are defined above.
If the beginning of $U$ is contained in $U_2$ we can reduce this case to the previous one by considering the words $W_0=U^{-1}V^{-1}, U^{-1}$, and $V^{-1}$ instead of $W, U$, and $V$.

 So $U \cup U_2 \subset W \subseteq U \cup V$. It follows again that $V$ and, hence $V_2$, begin with $C$.
 
Consider the word $V_2$. If $V_2=U$, then $W=V^2$ and the axes $\lambda, \lambda_1$,and $\lambda_2$ (defined in the previous section) coincide, contradicting their choice to be distinct. 

 Otherwise, $V \cap V_2 \neq \varnothing$.   Consider two cases.
 
\bigskip 
 
\textbf{Case 5.} 
 
$V \cap V_2 \neq \varnothing$ and the initial $C$ of $V_2$ is in $U$.

If the beginning $C$ of $V_2$ is "standard" in $U$ then, as in Case 1(above), $V=V_2=C$. As $L(V)=L(U)$, it follows that $U=C$, hence $W=C^2$ and the 
axes  $\lambda, \lambda_1$,and $\lambda_2$ (defined in the previous section) coincide, contradicting their choice to be distinct. 

If the beginning $C$ of $V_2$ is "non-standard" in $U$ then, as in Case 2(above), $C=C_0^m$. Hence as in Case 2(above), it follows that $V=V_2=C_0^m=C$. We got a contradiction with the assumptions that $V_2 \subset U$ and $V \cap V_2 \neq \varnothing$. Therefore this case is impossible.

\bigskip

\textbf{Case 6.} 

$V \cap V_2 \neq \varnothing$ and the initial $C$ of $V_2$ intersects $V$. 

As in Case 2(above) it follows that $V=V_2=C$, hence  as $L(V)=L(U)$ it follows that $W=U^2=C^2$ and the axes $\lambda, \lambda_1$,and $\lambda_2$ (defined in the previous section) coincide, contradicting their choice to be distinct. 

Therefore in the special case when $L(U)=L(V)=\frac{1}{2}L(W)$ the axes $\lambda, \lambda_1$,and $\lambda_2$ (defined in the previous section) coincide, contradicting their choice to be distinct. 
That contradiction completes the proof of Theorem 2.

\begin{remark}  
Note that there exists a decomposition $C=IT$, where $T$ is a terminal subword of $C$. If $T=B$, then $W$ is a conjugate of $C^{k+1}$, so the axes
 $\lambda, \lambda_1$,and $\lambda_2$ (defined in the previous section) coincide, contradicting their choice to be distinct. 
What can be said about $W$ if $T \neq B$? 
\end{remark}

The following conjecture was formulated by Max Neumann-Coto.

\textbf{Conjecture}
\emph{Assume that $W$ overlaps two of its conjugates in such a way that the overlaps cover all of $W$ and the overlaps do not intersect. Then $W=DC^k$, where $C$ is non-trivial and $k >1$ and the conjugates have the form $C^rDC^{k-r}$ and $C^sDC^{k-s}$.}

\section{Acknowledgment}
The author would like to thank Peter Scott for helpful conversations.

\end{document}